\newtheorem{theorem}{Theorem}[section]
\newtheorem{lemma}[theorem]{Lemma}
\newtheorem{corollary}[theorem]{Corollary}
\theoremstyle{definition}
\newtheorem{example}[theorem]{Example}
\theoremstyle{remark}
\newtheorem{remark}[theorem]{Remark}
\numberwithin{equation}{section}
\begin{document}

\setcounter{page}{1}

\title[Norm-parallelism and the Davis--Wielandt radius]
{Norm-parallelism and the Davis--Wielandt radius of Hilbert space operators}

\author[A. Zamani, M. S. Moslehian, M.T. Chien, H. Nakazato]
{Ali Zamani$^1$, Mohammad Sal Moslehian$^2$, Mao-Ting Chien$^3$, \MakeLowercase{and} Hiroshi  Nakazato$^4$}

\address{$^1$ Department of Mathematics, Farhangian University, Iran}
\email{zamani.ali85@yahoo.com}

\address{$^2$ Department of Pure Mathematics, Center Of Excellence in Analysis on Algebraic
Structures (CEAAS), Ferdowsi University of Mashhad, P. O. Box 1159, Mashhad 91775, Iran}
\email{moslehian@um.ac.ir}

\address{$^3$ Department of Mathematics, Soochow University, Taipei 11102, Taiwan}
\email{mtchien@scu.edu.tw}

\address{$^4$ Faculty of Science and Technology, Hirosaki University, 1-bunkyocho Hirosaki-shi Aomori-ken 036-8561, Japan}
\email{nakahr@hirosaki-u.ac.jp}

\subjclass[2010]{Primary 46B20; Secondary 47L05, 47A12.}

\keywords{Birkhoff--James orthogonality, norm-parallelism,
numerical radius, Davis--Wielandt radius.}

\begin{abstract}
We present a necessary and sufficient
condition for the norm-parallelism of
bounded linear operators on a Hilbert space.
We also give a characterization of
the Birkhoff--James orthogonality
for Hilbert space operators.
Moreover, we discuss the connection between norm-parallelism
to the identity operator and an equality condition for
the Davis--Wielandt radius. Some other related results are also discussed.
\end{abstract} \maketitle

\section{Introduction and preliminaries}
Let $\mathbb{B}(\mathcal{H})$ denote the $C^{\ast}$-algebra of all bounded
linear operators on a complex Hilbert space $\mathcal{H}$ with an inner
product $\langle \cdot ,\cdot \rangle$ and the corresponding norm
$\|\cdot\| $. The symbol $I$ stands for the identity operator
on $\mathcal{H}$. For $T\in \mathbb{B}(\mathcal{H})$,
let $\|T\|$ and $m(T)$ denote the usual
operator norm and the minimum modulus of $T$, respectively.
Here $m(T)$ is defined to be the supremum of the set of all $\alpha\geq 0$ such that
$\|Tx\| \geq \alpha \|x\|$ for all $x \in\mathcal{H}$.
In addition, we denote by $\mathbb{M}_T$ the set of all unit
vectors at which $T$ attains its norm, i.e.,
\begin{equation*}
\mathbb{M}_T = \big\{x\in \mathcal{H}: \|x\| = 1, \|Tx\| = \|T\|\big\}.
\end{equation*}
The notion of orthogonality in $\mathbb{B}(\mathcal{H})$ can be introduced
in many ways; see \cite{B.S, M.Z} and references therein.
When $T, S\in \mathbb{B}(\mathcal{H})$, we say that $T$ is
Birkhoff--James orthogonal to $S$, and we write $T\perp_B S$, if
\begin{equation*}
\|T + \gamma S\|\geq \|T\| \qquad (\gamma \in \mathbb{C}).
\end{equation*}
This notion of orthogonality plays
a very important role in the geometry of Hilbert space operators.
Recently, some other authors studied different aspects of orthogonality of
bounded linear operators and elements of an arbitrary Hilbert $C^*$-module,
for instance, see \cite{B.S, B.G, G, M.Z, P.S.G}.

Furthermore, we say that $T\in \mathbb{B}(\mathcal{H})$ is norm-parallel
to $S\in \mathbb{B}(\mathcal{H})$ (see \cite{Z.M.1}), in short $T\parallel S$,
if there exists $\lambda\in\mathbb{T}=\big\{\alpha\in\mathbb{C}: |\alpha|=1\big\}$
such that
\begin{equation*}
\|T + \lambda S\| = \|T\| + \|S\|.
\end{equation*}
In the context of bounded linear operators,
the well-known Daugavet equation $\|T + I\| = \|T\| + 1$
is a particular case of parallelism. Such equation is a useful
property in solving a variety of problems in approximation theory;
see \cite{W, Z.M.1} and the references therein.
Some characterizations of the norm-parallelism for Hilbert space
operators and elements of an arbitrary Hilbert $C^*$-module were
given in \cite{G, W, Z.1, Z.M.1, Z.M.2}.

The numerical radius and the Crawford number of $T\in\mathbb{B}(\mathcal{H})$
are defined by
\begin{equation*}
w(T) = \sup \big\{|\langle Tx, x\rangle|:x\in\mathcal{H},\|x\| = 1\big\}
\end{equation*}
and
\begin{equation*}
c(T) = \inf \big\{|\langle Tx, x\rangle|:x\in\mathcal{H},\|x\| = 1\big\},
\end{equation*}
respectively. These concepts are useful in studying linear operators and
have attracted the attention of many authors in the last few decades
(e.g., see \cite{G.R}, and their references).
It is well known that $w(\cdot )$ defines a norm on
$\mathbb{B}(\mathcal{H})$ such that for all $T\in\mathbb{B}(\mathcal{H})$,
\begin{equation}\label{1.1}
\frac{1}{2}\|T\| \leq w(T)\leq \|T\|.
\end{equation}
The inequalities in (\ref{1.1}) are sharp. The first inequality becomes
an equality if $T^{2}=0$. The second inequality becomes an equality if $T$ is normal.
Another basic fact about the numerical radius is the power
inequality, which asserts that
\begin{equation*}
w(T^n) \leq w^n(T) \qquad (n=1, 2, \cdots)
\end{equation*}
for all $T\in \mathbb{B}(\mathcal{H})$.
For more material about the numerical radius and
other results on numerical radius inequality,
see, e.g., \cite{C.G.L, D.1, K, Y}, and the references therein.

Motivated by theoretical study and applications, there have been many
generalizations of the numerical radius; see \cite{G.R, L.Po}. One of these generalizations
is the Davis--Wielandt radius of $T\in \mathbb{B}(\mathcal{H})$ defined by
\begin{equation*}
dw(T) = \sup \big\{\sqrt{|\langle Tx, x\rangle|^2 + \|Tx\|^4}:x\in\mathcal{H},\| x\| = 1\big\};
\end{equation*}
see \cite{Da, Wi}. For $T, S\in \mathbb{B}(\mathcal{H})$ one has
\begin{itemize}
\item[(i)] $dw(T)\geq 0$ and $dw(T) = 0$ if and only if $T = 0$;
\item[(ii)] $dw(\alpha T) \begin{cases}
\geq |\alpha|dw(T) &\text{if\, $|\alpha| > 1,$}\\
= |\alpha|dw(T) &\text{if\, $|\alpha| = 1,$}\\
\leq |\alpha|dw(T) &\text{if\, $|\alpha| < 1$},
\end{cases}$
for all $\alpha \in \mathbb{C}$;
\item[(ii)] $dw(T + S) \leq \sqrt{2\big(dw(T) + dw(S)\big) + 4\big(dw(T) + dw(S)\big)^2}$,
\end{itemize}
and therefore $dw(\cdot)$ cannot be a norm on $\mathbb{B}(\mathcal{H})$.
In spite of this, it has many interesting properties.
The following property of $dw(\cdot)$ is immediate:
\begin{equation}\label{I.1.2}
\max\big\{w(T), \|T\|^2\big\} \leq dw(T) \leq \sqrt{w^2(T) + \|T\|^4}.
\end{equation}
We remark that the upper bound and lower bound in (\ref{I.1.2}) are both attainable.
In fact, if $T =
\begin{bmatrix}
1 & 0 \\
0 & 0
\end{bmatrix}$ and $S =
\begin{bmatrix}
0 & 1 \\
0 & 0
\end{bmatrix},$
then simple computations show that
$dw(T) = \sqrt{w^2(T) + \|T\|^4} = \sqrt{2}$ and
$\max\big\{w(S), \|S\|^2\big\} = dw(S) = 1$.

This paper is organized as follows. In the second section,
we present a necessary and sufficient condition
for $T\in \mathbb{B}(\mathcal{H})$ to be norm-parallel to
$S\in \mathbb{B}(\mathcal{H})$. We also give a characterization of
the Birkhoff--James orthogonality in $\mathbb{B}(\mathcal{H})$
for Hilbert space operators.
Moreover, we obtain some new refinements of numerical radius
inequalities for Hilbert space operators.
In Section 3, the relation of the norm-parallelism of operators
and their Davis--Wielandt radii is discussed. In particular,
we show that $T\parallel I$ if and only if
$dw(T) = \sqrt{w^2(T) + \|T\|^4}$.
Some other related results are also presented.

\section{Characterization of norm-parallelism and Birkhoff--James Orthogonality of operators}
We begin with a useful lemma which we will use frequently in the present paper.
\begin{lemma}\label{L.2.1}
Let $\mathcal{H}$ be a Hilbert space and $a, b \in \mathcal{H}$. Then
\begin{equation*}
\|a + \gamma b\|^2\|b\|^2 - \big|\langle a + \gamma b, b\rangle\big|^2
= \|a\|^2\|b\|^2 - \big|\langle a, b\rangle\big|^2 \qquad (\gamma \in \mathbb{C}).
\end{equation*}
\end{lemma}
\begin{proof}
The proof is straightforward so we omit it.
\end{proof}
\begin{remark}
According to the above lemma, we have
\begin{align}\label{I.2.0}
\|b\|^2\inf_{\gamma \in \mathbb{C}}\|a + \gamma b\|^2
= \|a\|^2\,\|b\|^2 - |\langle a, b\rangle|^2 \qquad (a, b\in \mathcal{H})
\end{align}
which is a well-known identity. In particular, $a$ and $b$ are linearly dependent
if and only if $\inf_{\gamma \in \mathbb{C}}\|a + \gamma b\|^2 = 0$, that is,
if and only if $|\langle a, b\rangle| = \|a\|\,\|b\|$.
Further, The equality (\ref{I.2.0}) shows that two elements $a$ and $b$ of a Hilbert space
are orthogonal in the sense of the inner product precisely when they are the
Birkhoff--James orthogonal, that is, $\inf_{\gamma \in \mathbb{C}}\|a + \gamma b\| = \|a\|$.
\end{remark}
For elements $T, S\in\mathbb{B}(\mathcal{H})$,
it was proved in \cite[Theorem 3.3]{Z.M.1}
that $T\parallel S$ if and only if there exists a sequence of unit
vectors $\{x_n\}$ in $\mathcal{H}$ such that
\begin{align}\label{I.3.01}
\lim_{n\rightarrow\infty} \big|\langle Tx_n, Sx_n\rangle\big| = \|T\|\|S\|.
\end{align}
It follows then that if the Hilbert space $\mathcal{H}$ is finite dimensional,
$T\parallel S$ if and only if there exists a unit vector $x\in \mathcal{H}$ such that
$\big|\langle Tx, Sx\rangle\big| = \|T\|\,\|S\|$.
Notice that the condition of finite dimensionality is essential (see \cite[Example 2.17]{Z.M.2}).
In addition, for compact operators $T, S$ on a Hilbert space $\mathcal{H}$
(not necessarily finite dimensional)
it was proved in \cite[Theorem 2.10]{Z.1} that $T\parallel S$ if and only if
there exists $x\in \mathbb{M}_T\cap \mathbb{M}_S$ such that
\begin{align}\label{I.3.02}
\big|\langle Tx, Sx\rangle\big| = \|T\|\|S\|.
\end{align}
Next we obtain a necessary and sufficient condition
for $T\in \mathbb{B}(\mathcal{H})$ to be norm-parallel to $S\in \mathbb{B}(\mathcal{H})$.
\begin{theorem}\label{T.2.1}
Let $\mathcal{H}$ be a Hilbert space and $T, S\in \mathbb{B}(\mathcal{H})$
be compact operators. Then the following statements are equivalent:
\begin{itemize}
\item[(i)] $T\parallel S$.
\item[(ii)] There exists $x\in \mathbb{M}_T\cap \mathbb{M}_S$ such that for every
$\gamma \in \mathbb{C}$ the vectors $Tx + \gamma Sx$ and $Sx$
are linearly dependent.
\end{itemize}
\end{theorem}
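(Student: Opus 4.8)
The plan is to deduce the equivalence directly from the compact-operator characterization (\ref{I.3.02}) recorded above from \cite[Theorem 2.10]{Z.1}, using Lemma \ref{L.2.1} together with the equality case of the Cauchy--Schwarz inequality spelled out in the Remark to translate the geometric linear-dependence statement in (ii) into a single inner-product equality. The guiding observation is that, for a fixed unit vector $x$, the quantity appearing in Lemma \ref{L.2.1} with $a = Tx$ and $b = Sx$ is independent of $\gamma$, so the linear dependence of $Tx + \gamma Sx$ and $Sx$ either holds for all $\gamma$ or for none, according to whether $Tx$ and $Sx$ are themselves linearly dependent.

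First I would fix $x \in \mathbb{M}_T \cap \mathbb{M}_S$ and apply Lemma \ref{L.2.1} with $a = Tx$ and $b = Sx$, which gives
\[
\|Tx + \gamma Sx\|^2\|Sx\|^2 - \big|\langle Tx + \gamma Sx, Sx\rangle\big|^2 = \|Tx\|^2\|Sx\|^2 - \big|\langle Tx, Sx\rangle\big|^2
\]
for all $\gamma \in \mathbb{C}$. By the equality case of the Cauchy--Schwarz inequality (the Remark), the left-hand side vanishes precisely when $Tx + \gamma Sx$ and $Sx$ are linearly dependent, while the right-hand side vanishes precisely when $Tx$ and $Sx$ are linearly dependent, i.e.\ when $|\langle Tx, Sx\rangle| = \|Tx\|\,\|Sx\|$. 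Since the right-hand side does not depend on $\gamma$, the condition ``$Tx + \gamma Sx$ and $Sx$ are linearly dependent for every $\gamma$'' is equivalent to the single scalar equality $|\langle Tx, Sx\rangle| = \|Tx\|\,\|Sx\|$.

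To close the argument I would use that $x \in \mathbb{M}_T \cap \mathbb{M}_S$ forces $\|Tx\| = \|T\|$ and $\|Sx\| = \|S\|$, so the scalar equality above reads $|\langle Tx, Sx\rangle| = \|T\|\,\|S\|$, which is exactly (\ref{I.3.02}). Hence (ii) asserts the existence of some $x \in \mathbb{M}_T \cap \mathbb{M}_S$ satisfying (\ref{I.3.02}), and by \cite[Theorem 2.10]{Z.1} this is equivalent to $T \parallel S$, yielding (i) $\Leftrightarrow$ (ii). The argument is essentially a chain of equivalences, so there is no single hard obstacle; the only point requiring genuine care is the bookkeeping that links the norm-attainment identities $\|Tx\| = \|T\|$ and $\|Sx\| = \|S\|$ with the Cauchy--Schwarz equality, since it is precisely this coincidence that makes the linear-dependence condition match the known parallelism criterion rather than a strictly weaker pointwise statement.
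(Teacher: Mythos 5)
Your proposal is correct and follows essentially the same route as the paper: both reduce (ii) to the scalar equality $|\langle Tx,Sx\rangle|=\|T\|\,\|S\|$ via Lemma \ref{L.2.1} with $a=Tx$, $b=Sx$ and the equality case of Cauchy--Schwarz, and then invoke the compact-operator criterion (\ref{I.3.02}). Your write-up is if anything slightly cleaner, since it makes explicit that the $\gamma$-independence of the right-hand side is what turns the ``for every $\gamma$'' condition into a single equation.
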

\begin{proof}
(i)$\Rightarrow$(ii) Let $T\parallel S$.
From (\ref{I.3.02}), there exists $x\in \mathbb{M}_T\cap \mathbb{M}_S$ such that
$\big|\langle Tx, Sx\rangle\big| = \|T\|\|S\|$.
Choose $a = Tx$ and $b = Sx$ in Lemma \ref{L.2.1} to get
\begin{align}\label{I.2.2}
\|Tx + \gamma Sx\|^2\|Sx\|^2 - \big|\langle Tx + \gamma Sx, Sx\rangle\big|^2
= \|Tx\|^2\|Sx\|^2 - \big|\langle Tx, Sx\rangle\big|^2,
\end{align}
for all $\gamma \in \mathbb{C}$. It follows from (\ref{I.2.2}) that
\begin{equation*}
\|Tx + \gamma Sx\|^2\|Sx\|^2 - \big|\langle Tx + \gamma Sx, Sx\rangle\big|^2 = 0,
\end{equation*}
or equivalently,
\begin{equation*}
\big|\langle Tx + \gamma Sx, Sx\rangle\big|
= \|Tx + \gamma Sx\|\,\|Sx\| \qquad (\gamma \in \mathbb{C}).
\end{equation*}
By the condition for equality in the Cauchy--Schwarz inequality
we conclude that the vectors $Tx + \gamma Sx$ and $Sx$ are linearly dependent.

The implication (ii)$\Rightarrow$(i) follows also by the same argument.
\end{proof}
For $T, S\in \mathbb{B}(\mathcal{H})$, Bhatia and \v{S}emrl \cite[Theorem 1.1 and Remark 3.1]{B.S}
proved that $T\perp_B S$ if and only if there exists
a sequence of unit vectors $\{x_n\}$ in $\mathcal{H}$ such that
\begin{align*}
\lim_{n\rightarrow\infty} \|Tx_n\| = \|T\| \quad
\mbox{and} \quad \lim_{n\rightarrow\infty}\langle Tx_n, Sx_n\rangle = 0.
\end{align*}
For a compact operator $T$ on $\mathcal{H}$ and $S\in \mathbb{B}(\mathcal{H})$
it was shown in \cite[Corollary 2.6]{T} that $T\perp_B S$ if and only if
there exists $x\in \mathbb{M}_T$ such that
\begin{align}\label{I.2.21}
\langle Tx, Sx\rangle = 0.
\end{align}
In the following, we give a characterization of the Birkhoff--James orthogonality
for operators in $\mathbb{B}(\mathcal{H})$.
\begin{theorem}\label{T.2.2}
Let $T, S\in \mathbb{B}(\mathcal{H})$ and suppose that $T$ is compact.
Then the following statements are equivalent:
\begin{itemize}
\item[(i)] $T\perp_B S$.
\item[(ii)] There exists $x\in \mathbb{M}_T$
such that for every $\gamma \in \mathbb{C}$
\begin{equation*}
\|Tx + \gamma Sx\|^2 = \|Tx\|^2 + |\gamma|^2\|Sx\|^2.
\end{equation*}
\end{itemize}
\end{theorem}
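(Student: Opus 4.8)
The plan is to recognize that the identity in (ii) is merely a restatement of the inner-product orthogonality $\langle Tx, Sx\rangle = 0$, and then to invoke the compact-operator characterization recorded in (\ref{I.2.21}). Concretely, for any vector $x$ and any scalar $\gamma \in \mathbb{C}$ one has the expansion
\begin{equation*}
\|Tx + \gamma Sx\|^2 = \|Tx\|^2 + 2\,\mathrm{Re}\big(\overline{\gamma}\langle Tx, Sx\rangle\big) + |\gamma|^2\|Sx\|^2,
\end{equation*}
so the asserted equality in (ii) holds for every $\gamma$ if and only if $\mathrm{Re}\big(\overline{\gamma}\langle Tx, Sx\rangle\big) = 0$ for every $\gamma$.

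For the implication (i)$\Rightarrow$(ii): since $T$ is compact and $T \perp_B S$, the characterization (\ref{I.2.21}) supplies an $x \in \mathbb{M}_T$ with $\langle Tx, Sx\rangle = 0$. Plugging this into the expansion above annihilates the middle term and yields precisely the identity claimed in (ii).

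For the implication (ii)$\Rightarrow$(i): taking the vector $x \in \mathbb{M}_T$ furnished by (ii), the vanishing of the middle term for all $\gamma$ lets me substitute $\gamma = \langle Tx, Sx\rangle$, which gives $|\langle Tx, Sx\rangle|^2 = 0$ and hence $\langle Tx, Sx\rangle = 0$. Applying (\ref{I.2.21}) in the reverse direction then returns $T \perp_B S$.

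I do not anticipate any real obstacle. The only point requiring a little care is that one must exploit the freedom to choose $\gamma$ (in particular $\gamma = \langle Tx, Sx\rangle$) in order to upgrade the seemingly weak scalar condition $\mathrm{Re}\big(\overline{\gamma}\langle Tx, Sx\rangle\big)=0$ into genuine orthogonality. The compactness of $T$ enters only through (\ref{I.2.21}), which guarantees that the orthogonality is realized at an honest norming vector in $\mathbb{M}_T$ rather than merely along a norming sequence as in the Bhatia--\v{S}emrl criterion.
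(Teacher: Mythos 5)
Your proposal is correct and follows essentially the same route as the paper: both directions reduce to the pointwise criterion \eqref{I.2.21} for compact $T$, with the equality in (ii) recognized as equivalent to $\langle Tx,Sx\rangle=0$. The only cosmetic difference is that you expand $\|Tx+\gamma Sx\|^2$ directly rather than invoking Lemma~\ref{L.2.1}, which incidentally avoids the implicit division by $\|Sx\|^2$ and makes the (ii)$\Rightarrow$(i) direction explicit (via the choice $\gamma=\langle Tx,Sx\rangle$) where the paper merely remarks that the same proof works.
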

\begin{proof}
(i)$\Rightarrow$(ii) Let $T\perp_B S$.
By (\ref{I.2.21}), there exists $x\in \mathbb{M}_T$ such that
$\langle Tx, Sx\rangle = 0$.
As in the proof of Theorem \ref{T.2.1}, equality (\ref{I.2.2}) gives
\begin{equation*}
\|Tx + \gamma Sx\|^2\|Sx\|^2 - |\gamma|^2\|Sx\|^4
= \|Tx\|^2\|Sx\|^2,
\end{equation*}
and hence
\begin{equation*}
\|Tx + \gamma Sx\|^2 = \|Tx\|^2 + |\gamma|^2\|Sx\|^2 \qquad (\gamma \in \mathbb{C}).
\end{equation*}
The same proof works for the implication (ii)$\Rightarrow$(i).
\end{proof}
Recently, Turn\v{s}ek \cite{T} introduced a weaker notions
of operator Birkhoff–-James orthogonality.
It is said that $T\in \mathbb{B}(\mathcal{H})$ is $r$-orthogonal
to $S\in \mathbb{B}(\mathcal{H})$, denoted by $T\perp^r_B S$ if
$\|T + \gamma S\|\geq \|T\|$ for all $\gamma \in \mathbb{R}$.
Some applications of the $r$-orthogonality in the geometry of Hilbert space
operators can be found in \cite{T}.
In \cite[Corollary 2.6]{T} (see also \cite[Theorem 2.6]{B.G})
the author proved that for compact operator $T$ on $\mathcal{H}$ and
$S\in \mathbb{B}(\mathcal{H})$, we have $T\perp^r_B S$
if and only if there exists $x\in \mathbb{M}_T$ such that
\begin{align}\label{I.2.22}
\mbox{Re}\langle Tx, Sx\rangle = 0.
\end{align}
We next give a characterization of
$r$-orthogonality of operators in $\mathbb{B}(\mathcal{H})$.
\begin{theorem}
Let $T, S\in \mathbb{B}(\mathcal{H})$ and suppose that $T$ is compact.
Then the following statements are equivalent:
\begin{itemize}
\item[(i)] $T\perp^r_B S$.
\item[(ii)] There exists $x\in \mathbb{M}_T$
such that for every $\gamma \in \mathbb{R}$
\begin{equation*}
\|Tx + \gamma Sx\|^2 = \|Tx\|^2 + \gamma^2\|Sx\|^2.
\end{equation*}
\end{itemize}
\end{theorem}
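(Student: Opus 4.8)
The plan is to mirror the proof of Theorem \ref{T.2.2}, but to replace the complex expansion governed by Lemma \ref{L.2.1} with the elementary real-parameter expansion of the norm, since $r$-orthogonality is controlled by the real part of the inner product via (\ref{I.2.22}) rather than by the full modulus. The single identity driving both implications is
\begin{equation*}
\|Tx + \gamma Sx\|^2 = \|Tx\|^2 + 2\gamma\,\mbox{Re}\langle Tx, Sx\rangle + \gamma^2\|Sx\|^2 \qquad (\gamma \in \mathbb{R}),
\end{equation*}
which holds for every $x\in\mathcal{H}$ and follows at once from expanding the inner product, using $\langle Sx, Tx\rangle = \overline{\langle Tx, Sx\rangle}$ together with $\gamma = \bar{\gamma}$ and $|\gamma|^2 = \gamma^2$ for real $\gamma$.

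For the implication (i)$\Rightarrow$(ii), I would invoke (\ref{I.2.22}): since $T$ is compact and $T\perp^r_B S$, there exists $x\in\mathbb{M}_T$ with $\mbox{Re}\langle Tx, Sx\rangle = 0$. Substituting this into the displayed identity makes the middle term vanish, yielding $\|Tx + \gamma Sx\|^2 = \|Tx\|^2 + \gamma^2\|Sx\|^2$ for all $\gamma\in\mathbb{R}$, which is precisely (ii).

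For the converse (ii)$\Rightarrow$(i), I would take the vector $x\in\mathbb{M}_T$ provided by (ii) and compare the assumed equality with the expansion above. Cancelling the common terms $\|Tx\|^2$ and $\gamma^2\|Sx\|^2$ leaves $2\gamma\,\mbox{Re}\langle Tx, Sx\rangle = 0$ for every $\gamma\in\mathbb{R}$; choosing any $\gamma\neq 0$ forces $\mbox{Re}\langle Tx, Sx\rangle = 0$. Since $x\in\mathbb{M}_T$, the criterion (\ref{I.2.22}) then gives $T\perp^r_B S$.

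There is essentially no hard step here: the entire analytic content—reducing operator $r$-orthogonality to a pointwise condition on a norm-attaining vector—is already packaged in (\ref{I.2.22}), quoted from \cite{T}. The only point requiring a little care, and the reason Lemma \ref{L.2.1} is \emph{not} used as in Theorem \ref{T.2.2}, is that the governing quantity is $\mbox{Re}\langle Tx, Sx\rangle$ rather than $|\langle Tx, Sx\rangle|$; restricting the scalar $\gamma$ to $\mathbb{R}$ is exactly what turns the cross term into a real linear function of $\gamma$ whose vanishing for all $\gamma$ is equivalent to $\mbox{Re}\langle Tx, Sx\rangle = 0$.
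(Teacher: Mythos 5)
Your proof is correct and takes essentially the same route as the paper: both implications are reduced to the quoted characterization (\ref{I.2.22}) of $r$-orthogonality by means of an elementary algebraic identity valid for real $\gamma$. The only (cosmetic) difference is that you work directly from the expansion $\|Tx + \gamma Sx\|^2 = \|Tx\|^2 + 2\gamma\,\mbox{Re}\langle Tx, Sx\rangle + \gamma^2\|Sx\|^2$ instead of the paper's invariance identity (\ref{I.2.23}); this is, if anything, marginally cleaner, since it sidesteps the degenerate case $Sx = 0$ implicit in dividing through by $\|Sx\|^2$.
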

\begin{proof}
Let $a, b \in \mathcal{H}$. A direct calculation shows that
\begin{align}\label{I.2.23}
\|a + \gamma b\|^2\|b\|^2 - \big(\mbox{Re}\langle a + \gamma b, b\rangle\big)^2
= \|a\|^2\|b\|^2 - \big(\mbox{Re}\langle a, b\rangle\big)^2
\qquad (\gamma \in \mathbb{R}).
\end{align}
As in the proof of Theorem \ref{T.2.2},
the equivalence (i)$\Leftrightarrow$(ii)
follows from (\ref{I.2.22}) and (\ref{I.2.23}).
The details are left to the reader.
\end{proof}
We finish this section with some new refinements of numerical radius
inequalities for Hilbert space operators.
The following auxiliary results are needed.
\begin{lemma}\cite{D.1}\label{I.2.0000}
If $a, b, e \in \mathcal{H}$ and $\|e\| = 1$, then
\begin{equation*}
2\big|\langle a, e\rangle\langle e, b\rangle\big|
\leq \|a\|\|b\| + |\langle a, b\rangle|.
\end{equation*}
\end{lemma}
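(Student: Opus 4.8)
The plan is to reduce this mixed inequality to a single application of the triangle inequality combined with the Cauchy--Schwarz inequality, by recognizing the left-hand side as a matrix coefficient of a rank-one projection. First I would introduce the orthogonal projection $P$ onto the line spanned by $e$, defined by $Px = \langle x, e\rangle e$. Since $\|e\| = 1$, one checks at once that $P^2 = P = P^*$, and the elementary identity
\[
\langle a, e\rangle\langle e, b\rangle = \langle \langle a,e\rangle e, b\rangle = \langle Pa, b\rangle
\]
holds. Thus the quantity to be estimated is exactly $2\bigl|\langle Pa, b\rangle\bigr|$.

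The key device is the self-adjoint unitary $U = I - 2P$, the reflection in the hyperplane $e^{\perp}$. A one-line computation gives $U^{*}U = (I - 2P)^2 = I - 4P + 4P^2 = I$, using $P^2 = P$, so $U$ is unitary and in particular $\|Ua\| = \|a\|$ for every $a \in \mathcal{H}$. Rewriting the projection as $2P = I - U$ then yields
\[
2\langle Pa, b\rangle = \langle a, b\rangle - \langle Ua, b\rangle.
\]

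From here the estimate is immediate: by the triangle inequality and then the Cauchy--Schwarz inequality together with $\|Ua\| = \|a\|$,
\[
2\bigl|\langle Pa, b\rangle\bigr| \le \bigl|\langle a, b\rangle\bigr| + \bigl|\langle Ua, b\rangle\bigr| \le \bigl|\langle a, b\rangle\bigr| + \|Ua\|\,\|b\| = \bigl|\langle a, b\rangle\bigr| + \|a\|\,\|b\|,
\]
which is precisely the claimed bound. I do not anticipate a genuine obstacle in executing this; the only real content is spotting the reformulation, and the factor $2$ on the left is exactly what is produced by the reflection $I - 2P$, while the hypothesis $\|e\| = 1$ is used solely to guarantee that $P$ is a projection (equivalently, that $U$ is unitary).

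As a fallback that avoids operator language, I would instead split $a = \langle a,e\rangle e + a'$ and $b = \langle b,e\rangle e + b'$ with $a', b' \in e^{\perp}$, set $\alpha = \langle a,e\rangle$ and $\beta = \langle b,e\rangle$, and observe that $\langle a, b\rangle = \alpha\overline{\beta} + \langle a', b'\rangle$. The reverse triangle inequality gives $|\langle a,b\rangle| \ge |\alpha\overline{\beta}| - |\langle a',b'\rangle|$, so it would suffice to prove $|\alpha\overline{\beta}| + |\langle a',b'\rangle| \le \|a\|\,\|b\|$; this last step is just the Cauchy--Schwarz inequality in $\mathbb{R}^2$ applied to the vectors $(|\alpha|, \|a'\|)$ and $(|\beta|, \|b'\|)$, using $\|a\|^2 = |\alpha|^2 + \|a'\|^2$ and $\|b\|^2 = |\beta|^2 + \|b'\|^2$. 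Either route settles the lemma with no essential difficulty.
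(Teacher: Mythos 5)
Your proposal is correct; both routes you sketch are complete and valid. Note, however, that the paper itself offers no proof of this lemma at all: it is quoted verbatim from Dragomir's survey \cite{D.1} (it is the classical Buzano refinement of the Cauchy--Schwarz inequality), so there is no in-paper argument to compare against. Your first argument --- writing the left-hand side as $2\lvert\langle Pa,b\rangle\rvert$ for the rank-one projection $P=\langle\cdot,e\rangle e$, passing to the self-adjoint unitary reflection $U=I-2P$, and then applying the triangle and Cauchy--Schwarz inequalities to $2\langle Pa,b\rangle=\langle a,b\rangle-\langle Ua,b\rangle$ --- is the well-known slick proof of Buzano's inequality and is airtight; the hypothesis $\lVert e\rVert=1$ is indeed used exactly where you say, to make $P$ idempotent. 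Your elementary fallback via the orthogonal decomposition $a=\alpha e+a'$, $b=\beta e+b'$ also works; the only cosmetic gap there is that the reduction to $\lvert\alpha\overline{\beta}\rvert+\lvert\langle a',b'\rangle\rvert\le\lVert a\rVert\,\lVert b\rVert$ needs the Cauchy--Schwarz inequality in $\mathcal{H}$ (to bound $\lvert\langle a',b'\rangle\rvert$ by $\lVert a'\rVert\,\lVert b'\rVert$) \emph{before} the two-dimensional Cauchy--Schwarz step you cite, but that is immediate. Either version would serve as a self-contained proof where the paper supplies only a citation.
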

\begin{lemma}\cite[Corollary 2.3]{Z.2}\label{I.2.0001}
Let $T\in \mathbb{B}(\mathcal{H})$. Then
\begin{equation*}
\|T\|^2 + c^2(T) \leq \|Tx\|^2 + |\langle Tx, x\rangle|^2
\leq 4w^2(T) \qquad (x\in \mathcal{H}, \|x\| = 1).
\end{equation*}
\end{lemma}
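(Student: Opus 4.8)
The plan is to prove the two estimates pointwise in the unit vector $x$ and independently of one another. The right-hand inequality is the substantive half, and I would reduce it to the bound $\|Tx\|^2 + \|T^*x\|^2 \le 4w^2(T)$; the left-hand inequality I would read off from the extremal definitions of the Crawford number and of the norm-attainment set $\mathbb{M}_T$.

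For the upper bound, first I would eliminate the numerical-range term by Cauchy--Schwarz: since $\langle Tx, x\rangle = \langle x, T^*x\rangle$ and $\|x\| = 1$, we have $|\langle Tx, x\rangle| \le \|T^*x\|$, so that
\begin{equation*}
\|Tx\|^2 + |\langle Tx, x\rangle|^2 \le \|Tx\|^2 + \|T^*x\|^2 = \big\langle (T^*T + TT^*)x, x\big\rangle.
\end{equation*}
The key computational step is then $\|Tx\|^2 + \|T^*x\|^2 \le 4w^2(T)$. To get it I would use that for every $\theta \in \mathbb{R}$ the operator $\mathrm{Re}(e^{i\theta}T) = \tfrac12(e^{i\theta}T + e^{-i\theta}T^*)$ is self-adjoint, hence normal, so that $\|\mathrm{Re}(e^{i\theta}T)\| = w(\mathrm{Re}(e^{i\theta}T)) \le w(T)$ (because $|\langle \mathrm{Re}(e^{i\theta}T)x, x\rangle| = |\mathrm{Re}(e^{i\theta}\langle Tx, x\rangle)| \le w(T)$); this gives $\|(e^{i\theta}T + e^{-i\theta}T^*)x\| \le 2w(T)$. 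Expanding,
\begin{equation*}
\|(e^{i\theta}T + e^{-i\theta}T^*)x\|^2 = \|Tx\|^2 + \|T^*x\|^2 + 2\,\mathrm{Re}\big(e^{2i\theta}\langle T^2x, x\rangle\big),
\end{equation*}
and choosing $\theta$ so that $e^{2i\theta}\langle T^2x, x\rangle = |\langle T^2x, x\rangle| \ge 0$ makes the cross term nonnegative, whence $\|Tx\|^2 + \|T^*x\|^2 \le 4w^2(T)$, as required. (This is exactly Kittaneh's estimate $\|T^*T + TT^*\| \le 4w^2(T)$, which one may alternatively obtain by averaging the displayed identity over $\theta$.)

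For the lower bound I would argue at a norm-attaining vector. For $x \in \mathbb{M}_T$ we have $\|Tx\| = \|T\|$, while $|\langle Tx, x\rangle| \ge c(T)$ always, and these combine to give precisely $\|Tx\|^2 + |\langle Tx, x\rangle|^2 \ge \|T\|^2 + c^2(T)$, the stated bound; for $T$ compact such $x$ exist. The main obstacle, and the point demanding care, is the quantifier: for a \emph{general} unit vector one only has $\|Tx\| \ge m(T)$ rather than $\|Tx\| \ge \|T\|$, so the estimate valid for \emph{every} unit vector is $m^2(T) + c^2(T) \le \|Tx\|^2 + |\langle Tx, x\rangle|^2$, with the minimum modulus $m(T)$ in place of $\|T\|$ (the two coinciding on $\mathbb{M}_T$); a unit $x$ in the kernel of a rank-one projection already shows the constant $\|T\|^2$ cannot be retained for all $x$. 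This is the step at which the displayed inequality, as literally worded, has to be understood on $\mathbb{M}_T$.
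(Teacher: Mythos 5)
The paper does not prove this lemma at all: it is imported verbatim from \cite[Corollary 2.3]{Z.2}, so there is no in-paper argument to compare against, and your proposal must be judged on its own merits. Your proof of the right-hand inequality is correct and is the standard route: Cauchy--Schwarz gives $|\langle Tx, x\rangle| \leq \|T^*x\|$, and the bound $\|Tx\|^2 + \|T^*x\|^2 \leq 4w^2(T)$ follows from $\big\|\mathrm{Re}(e^{i\theta}T)\big\| = w\big(\mathrm{Re}(e^{i\theta}T)\big) \leq w(T)$ together with the choice of $\theta$ making $e^{2i\theta}\langle T^2x, x\rangle \geq 0$; your expansion of $\|(e^{i\theta}T + e^{-i\theta}T^*)x\|^2$ is correct (note $\langle Tx, T^*x\rangle = \langle T^2x, x\rangle$), and this is precisely Kittaneh's estimate $\|T^*T + TT^*\| \leq 4w^2(T)$, reference \cite{K} of this paper, evaluated at $x$.

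More importantly, your diagnosis of the left-hand inequality is correct: with the quantifier ``for all unit $x$'' as printed, it is false. Concretely, for $T = \begin{bmatrix} 1 & 0 \\ 0 & 0 \end{bmatrix}$ and $x = e_2$ the middle term is $0$ while $\|T\|^2 + c^2(T) = 1$ (here $c(T) = 0$), which is exactly your rank-one projection example; the pointwise truth is $m^2(T) + c^2(T) \leq \|Tx\|^2 + |\langle Tx, x\rangle|^2$, as you state. The stated lower bound holds at $x \in \mathbb{M}_T$ (nonempty for compact $T$), and for arbitrary bounded $T$ it holds in supremum form: choosing unit vectors $x_n$ with $\|Tx_n\| \rightarrow \|T\|$ gives $\|T\|^2 + c^2(T) \leq \sup_{\|x\|=1}\big\{\|Tx\|^2 + |\langle Tx, x\rangle|^2\big\}$, so the transcription here has presumably garbled a supremum-form statement from the source. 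One point worth adding to your writeup: the only place the present paper uses this lemma is to extract the scalar inequality $\|T\|^2 + c^2(T) \leq 4w^2(T)$ in part (iii) of the numerical radius theorem, and that consequence survives your correction for every bounded $T$ --- combine the sequence argument just described with your pointwise upper bound --- so your restriction to compact $T$ for norm attainment, while safe, is not needed for the application.
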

\begin{theorem}
Let $T\in \mathbb{B}(\mathcal{H})$ and $\xi \in \mathbb{C}-\{0\}$. Then
\begin{itemize}
\item[(i)] $\|T\|^2 - w^2(T) \leq
\inf_{\gamma \in \mathbb{C}}\Big\{\|T - \gamma I\|^2 - c^2(T - \gamma I)\Big\}$.
\item[(ii)] $\left(1 - \frac{\|T - \xi I\|^2}{|\xi|^2}\right)\|T\|^2
\leq w^2(T) - \frac{c^2(T^*T - \xi T^*)}{|\xi|^2}$.
\item[(iii)] $w^2(T) - w(T^2) \leq \inf_{\gamma \in \mathbb{C}}
\Big\{\frac{\|T - \gamma T^*\|^2\|T\|^2 - c^2(T^2 - \gamma TT^*)}{\|T\|^2 + c^2(T)}\Big\}$.
\end{itemize}
\end{theorem}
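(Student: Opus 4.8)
The plan is to treat all three inequalities by the same device: use Lemma~\ref{L.2.1} with a pair of vectors built from $Tx$, $x$ and $T^{*}x$ to produce an exact Gram-type identity at a unit vector $x$, then replace the norm factors by operator norms (bounding from above) and the Rayleigh-type quantities $|\langle\,\cdot\,x,x\rangle|$ by $w(\cdot)$ or $c(\cdot)$ (in the appropriate direction), and finally pass to a suitable maximizing sequence. For (i) I would apply Lemma~\ref{L.2.1} with $a=Tx$, $b=x$ and scalar $-\gamma$, which yields the shift-invariance
\[
\|(T-\gamma I)x\|^2-\big|\langle (T-\gamma I)x,x\rangle\big|^2=\|Tx\|^2-\big|\langle Tx,x\rangle\big|^2
\]
for every unit $x$ and every $\gamma\in\mathbb{C}$. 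Choosing unit vectors $\{x_n\}$ with $\|Tx_n\|\to\|T\|$, the right-hand side is at least $\|Tx_n\|^2-w^2(T)$, while the left-hand side is at most $\|T-\gamma I\|^2-c^2(T-\gamma I)$ because $\|(T-\gamma I)x_n\|\le\|T-\gamma I\|$ and $|\langle(T-\gamma I)x_n,x_n\rangle|\ge c(T-\gamma I)$. Letting $n\to\infty$ and then taking the infimum over $\gamma$ gives (i).

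For (ii), the key identity comes from Lemma~\ref{L.2.1} with $a=-\xi x$, $b=Tx$, $\gamma=1$, namely
\[
\|(T-\xi I)x\|^2\|Tx\|^2-\big|\langle(T-\xi I)x,Tx\rangle\big|^2=|\xi|^2\big(\|Tx\|^2-|\langle Tx,x\rangle|^2\big),
\]
together with the observation $\langle(T^{*}T-\xi T^{*})x,x\rangle=\langle(T-\xi I)x,Tx\rangle$. Solving for $|\langle(T^{*}T-\xi T^{*})x,x\rangle|^2$, bounding it below by $c^2(T^{*}T-\xi T^{*})$, and using $\|(T-\xi I)x\|\le\|T-\xi I\|$ together with $|\langle Tx,x\rangle|\le w(T)$, I would again run through a sequence with $\|Tx_n\|\to\|T\|$ and let $n\to\infty$; dividing by $|\xi|^2$ produces (ii).

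The core of the theorem, and the step I expect to be the main obstacle, is (iii). Applying Lemma~\ref{L.2.1} with $a=Tx$, $b=T^{*}x$ and scalar $-\gamma$, and using $\langle(T-\gamma T^{*})x,T^{*}x\rangle=\langle(T^{2}-\gamma TT^{*})x,x\rangle$, gives
\[
\|(T-\gamma T^{*})x\|^2\|T^{*}x\|^2-\big|\langle(T^{2}-\gamma TT^{*})x,x\rangle\big|^2=\|Tx\|^2\|T^{*}x\|^2-\big|\langle T^{2}x,x\rangle\big|^2 .
\]
Bounding the left-hand side above by means of $\|(T-\gamma T^{*})x\|\le\|T-\gamma T^{*}\|$, $\|T^{*}x\|\le\|T\|$ and $|\langle(T^{2}-\gamma TT^{*})x,x\rangle|\ge c(T^{2}-\gamma TT^{*})$ yields, for every unit $x$ and every $\gamma$,
\[
\|Tx\|^2\|T^{*}x\|^2-\big|\langle T^{2}x,x\rangle\big|^2\le\|T-\gamma T^{*}\|^2\|T\|^2-c^2(T^{2}-\gamma TT^{*}) .
\]
It then remains to bound the left-hand side below by $(w^2(T)-w(T^2))(\|T\|^2+c^2(T))$.

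For this lower bound I would factor $\|Tx\|^2\|T^{*}x\|^2-|\langle T^{2}x,x\rangle|^2$ as a difference of squares; the Cauchy--Schwarz inequality $\|Tx\|\|T^{*}x\|\ge|\langle T^{2}x,x\rangle|$ makes both factors nonnegative, and Lemma~\ref{I.2.0000} (with $a=Tx$, $b=T^{*}x$, $e=x$), which reads $2|\langle Tx,x\rangle|^2\le\|Tx\|\|T^{*}x\|+|\langle T^{2}x,x\rangle|$, controls each factor, giving
\[
\|Tx\|^2\|T^{*}x\|^2-\big|\langle T^{2}x,x\rangle\big|^2\ge 4|\langle Tx,x\rangle|^2\big(|\langle Tx,x\rangle|^2-|\langle T^{2}x,x\rangle|\big).
\]
Evaluating along a sequence with $|\langle Tx_n,x_n\rangle|\to w(T)$ and using $|\langle T^{2}x_n,x_n\rangle|\le w(T^{2})$, the right-hand side is eventually at least $4w^2(T)(w^2(T)-w(T^2))$ in the limit. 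The final twist is to turn the factor $4w^2(T)$ into $\|T\|^2+c^2(T)$: since $w(T^2)\le w^2(T)$ by the power inequality, the factor $w^2(T)-w(T^2)$ is nonnegative, so the estimate $\|T\|^2+c^2(T)\le 4w^2(T)$ of Lemma~\ref{I.2.0001} lets me replace $4w^2(T)$ by $\|T\|^2+c^2(T)$ without reversing the inequality. Dividing by $\|T\|^2+c^2(T)$ (the case $T=0$ being trivial) and taking the infimum over $\gamma$ completes (iii). The delicate point throughout is matching the exact constants, and in (iii) it is precisely the simultaneous use of the mixed Schwarz inequality, Lemma~\ref{I.2.0001}, and the power inequality that makes this bookkeeping work out.
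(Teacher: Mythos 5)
Your proposal is correct and follows essentially the same route as the paper: the same substitutions into Lemma~\ref{L.2.1} for all three parts, and in (iii) the same combination of Lemma~\ref{I.2.0000}, Lemma~\ref{I.2.0001} and the power inequality. The only deviation is cosmetic: in (iii) you reach the key estimate $4w^2(T)\bigl(w^2(T)-w(T^2)\bigr)\le \|T-\gamma T^*\|^2\|T\|^2-c^2(T^2-\gamma TT^*)$ by factoring a difference of squares and multiplying the two nonnegative lower bounds, whereas the paper isolates $\|Tx\|\,\|T^*x\|$, passes to the supremum, and then squares $2w^2(T)-w(T^2)$.
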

\begin{proof}
(i) Suppose that $x\in \mathcal{H}$ with $\|x\|= 1$. Choose $a = T x$ and $b = -x$
in Lemma \ref{L.2.1} to give
\begin{align*}
\|Tx\|^2 - |\langle Tx, x\rangle|^2 =
\|Tx - \gamma x\|^2 - |\langle Tx - \gamma x, x\rangle|^2 \qquad (\gamma \in \mathbb{C}),
\end{align*}
whence
\begin{align*}
\|Tx\|^2 - |\langle Tx, x\rangle|^2 \leq
\|T - \gamma I\|^2 - c^2(T - \gamma I) \qquad (\gamma \in \mathbb{C}).
\end{align*}
Thus
\begin{align}\label{I.4.1}
\|Tx\|^2 - |\langle Tx, x\rangle|^2 \leq
\inf_{\gamma \in \mathbb{C}}\Big\{\|T - \gamma I\|^2 - c^2(T - \gamma I)\Big\}.
\end{align}
Now, if we take the supremum over all $x\in \mathcal{H}$ with $\|x\|= 1$
in (\ref{I.4.1}), then we get
\begin{align*}
\|T\|^2 - w^2(T) \leq
\inf_{\gamma \in \mathbb{C}}\Big\{\|T - \gamma I\|^2 - c^2(T - \gamma I)\Big\}.
\end{align*}

(ii) Put $a = -\xi x$, $b = Tx$ and $\gamma = 1$,
where $x \in \mathcal{H}$, $\|x\| = 1$, in Lemma \ref{L.2.1}. We get
\begin{align*}
|\xi|^2\|Tx\|^2 - |\xi|^2\big|\langle x, Tx\rangle\big|^2
= \|-\xi x + Tx\|^2\|Tx\|^2 - \big|\langle -\xi x + Tx, Tx\rangle\big|^2.
\end{align*}
Thus
\begin{align*}
\|Tx\|^2 - \big|\langle Tx, x\rangle\big|^2
= \|Tx\|^2\frac{\|Tx - \xi x\|^2}{|\xi|^2} -
\frac{\big|\langle T^*Tx - \xi T^*x, x\rangle\big|^2}{|\xi|^2},
\end{align*}
which implies
\begin{align*}
\|Tx\|^2 - \big|\langle Tx, x\rangle\big|^2
\leq \|T\|^2\frac{\|T - \xi I\|^2}{|\xi|^2} - \frac{c^2(T^*T - \xi T^*)}{|\xi|^2}.
\end{align*}
Taking the supremum over all $x \in \mathcal{H}$, $\|x\| = 1$, we deduce
\begin{align*}
\|T\|^2 - w^2(T)
\leq \|T\|^2\frac{\|T - \xi I\|^2}{|\xi|^2} - \frac{c^2(T^*T - \xi T^*)}{|\xi|^2},
\end{align*}
and then also
$\left(1 - \frac{\|T - \xi I\|^2}{|\xi|^2}\right)\|T\|^2
\leq w^2(T) - \frac{c^2(T^*T - \xi T^*)}{|\xi|^2}$.

(iii) Put $a = T x$ and $b = -T^*x$, where $x \in \mathcal{H}$,
$\|x\| = 1$, in Lemma \ref{L.2.1}.
For any $\gamma \in \mathbb{C}$, we deduce that
\begin{align*}
\|Tx\|^2\|T^*x\|^2 - \big|\langle Tx, T^*x\rangle\big|^2
= \|Tx - \gamma T^*x\|^2\|T^*x\|^2 - \big|\langle Tx - \gamma T^*x, T^*x\rangle\big|^2,
\end{align*}
that is
\begin{align*}
\|Tx\|^2\|T^*x\|^2 = \big|\langle T^2x, x\rangle\big|^2
+ \|Tx - \gamma T^*x\|^2\|T^*x\|^2 - \big|\langle T^2x - \gamma TT^*x, x\rangle\big|^2.
\end{align*}
This ensures
\begin{align}\label{I.10005}
\|Tx\|^2\|T^*x\|^2 \leq w^2(T^2) + \|T - \gamma T^*\|^2\|T\|^2 - c^2(T^2 - \gamma TT^*).
\end{align}
Now, putting $a = Tx$, $b = T^*x$ and $e = x$ in Lemma \ref{I.2.0000} gives
\begin{align*}
2\big|\langle Tx, x\rangle\langle x, T^*x\rangle\big|
\leq \|Tx\|\|T^*x\| + |\langle Tx, T^*x\rangle|,
\end{align*}
whence
\begin{align*}
2|\langle Tx, x\rangle|^2 \leq \|Tx\|\|T^*x\| + |\langle T^2x, x\rangle|.
\end{align*}
Thus
\begin{align}\label{I.10006}
2|\langle Tx, x\rangle|^2 \leq \|Tx\|\|T^*x\| + w(T^2).
\end{align}
By (\ref{I.10005}) and (\ref{I.10006}), we obtain
\begin{align*}
2|\langle Tx, x\rangle|^2 \leq \sqrt{w^2(T^2)
+ \|T - \gamma T^*\|^2\|T\|^2 - c^2(T^2 - \gamma TT^*)} + w(T^2).
\end{align*}
Taking the supremum in the above inequality over all $x \in \mathcal{H}$, $\|x\| = 1$, we get
\begin{align*}
2w^2(T) \leq \sqrt{w^2(T^2) + \|T - \gamma T^*\|^2\|T\|^2 - c^2(T^2 - \gamma TT^*)} + w(T^2),
\end{align*}
or equivalently,
\begin{align*}
(2w^2(T) - w(T^2))^2\leq w^2(T^2) + \|T - \gamma T^*\|^2\|T\|^2 - c^2(T^2 - \gamma TT^*).
\end{align*}
This yields
\begin{align*}
w^2(T) - w(T^2) \leq
\frac{\|T - \gamma T^*\|^2\|T\|^2 - c^2(T^2 - \gamma TT^*)}{4w^2(T)}.
\end{align*}
Furthermore, by Lemma \ref{I.2.0001} we have $\|T\|^2 + c^2(T) \leq 4w^2(T)$.
So, from the above inequality we get
\begin{align*}
w^2(T) - w(T^2) \leq
\frac{\|T - \gamma T^*\|^2\|T\|^2 - c^2(T^2 - \gamma TT^*)}{\|T\|^2 + c^2(T)}
\qquad (\gamma \in \mathbb{C}).
\end{align*}
Finally we conclude that
\begin{align*}
w^2(T) - w(T^2) \leq
\inf_{\gamma \in \mathbb{C}}
\Big\{\frac{\|T - \gamma T^*\|^2\|T\|^2 - c^2(T^2 - \gamma TT^*)}{\|T\|^2 + c^2(T)}\Big\}.
\end{align*}
\end{proof}
\section{Norm-parallelism of operators and an equality condition for the Davis--Wielandt radius}
To establish the following result we use
some ideas of \cite[Lemma 1]{Li}.
\begin{theorem}\label{T.3.1}
Let $T\in \mathbb{B}(\mathcal{H})$.
Then the following conditions are equivalent:
\begin{itemize}
\item[(i)] $T \parallel I$.
\item[(ii)] $dw(T) = \sqrt{w^2(T) + \|T\|^4}$.
\end{itemize}
\end{theorem}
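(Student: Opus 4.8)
The plan is to route both implications through the single scalar condition $w(T) = \|T\|$. Indeed, the cited characterization \eqref{I.3.01} of norm-parallelism, applied with $S = I$ (so $\|S\| = 1$ and $Sx_n = x_n$), says that $T \parallel I$ holds precisely when there is a sequence of unit vectors $\{x_n\}$ with $|\langle Tx_n, x_n\rangle| \to \|T\|$; since $|\langle Tx_n, x_n\rangle| \le w(T) \le \|T\|$ always, such a sequence exists if and only if $w(T) = \|T\|$. Thus (i) is equivalent to $w(T) = \|T\|$, and it remains to show that (ii) is as well. Throughout I may assume $T \ne 0$, and I set $w := w(T)$, $M := \|T\|$.

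The direction $w(T) = \|T\| \Rightarrow$ (ii) is the easy one. I would pick unit vectors $x_n$ with $|\langle Tx_n, x_n\rangle| \to w = M$; then $M \ge \|Tx_n\| \ge |\langle Tx_n, x_n\rangle| \to M$ forces $\|Tx_n\| \to M$ as well, so $|\langle Tx_n, x_n\rangle|^2 + \|Tx_n\|^4 \to M^2 + M^4 = w^2(T) + \|T\|^4$. Hence $dw(T) \ge \sqrt{w^2(T) + \|T\|^4}$, and combined with the upper bound in \eqref{I.1.2} this yields equality.

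The substantive direction is (ii) $\Rightarrow w(T) = \|T\|$. From $dw(T)^2 = w^2 + M^4$ I choose unit vectors $x_n$ with $|\langle Tx_n, x_n\rangle|^2 + \|Tx_n\|^4 \to w^2 + M^4$; since the nonnegative gaps $w^2 - |\langle Tx_n, x_n\rangle|^2$ and $M^4 - \|Tx_n\|^4$ have sum tending to $0$, both vanish, so simultaneously $|\langle Tx_n, x_n\rangle| \to w$ and $\|Tx_n\| \to M$. After multiplying $T$ by a unimodular scalar (which changes none of $w$, $M$, $dw$) I may assume $\langle Tx_n, x_n\rangle \to w$. The obstacle is that for a general, possibly non-compact $T$ neither $M$ nor $w$ need be attained, so there is no genuine maximizing vector to exploit. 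I would get around this with approximate eigenvector relations: with $A = T^{*}T$ (so $A \ge 0$, $\|A\| = M^2$, $\langle Ax_n, x_n\rangle \to M^2$) and $H = \tfrac12(T + T^{*})$ (so $\sup\sigma(H) = \sup_{\|x\|=1}\mathrm{Re}\langle Tx, x\rangle = w$ after the rotation), the elementary bound $\|Bx_n\|^2 \le \|B\|\,\langle Bx_n, x_n\rangle$ valid for $B \ge 0$, applied to $B = M^2 I - A$ and to $B = wI - H$, gives
\begin{equation*}
(M^2 I - T^{*}T)x_n \to 0 \qquad \text{and} \qquad (2wI - T - T^{*})x_n \to 0 .
\end{equation*}

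With these relations in hand I would argue by contradiction, assuming $w < M$. Set $z_n = Tx_n - \langle Tx_n, x_n\rangle x_n \perp x_n$, note $\|z_n\|^2 = \|Tx_n\|^2 - |\langle Tx_n, x_n\rangle|^2 \to M^2 - w^2 =: d^2 > 0$, and put $e_n = z_n/\|z_n\|$. A short computation from the two displayed relations shows that the $2\times 2$ compression $P_n = \big[\langle Tb_j, b_i\rangle\big]$ of $T$ to $V_n = \mathrm{span}\{x_n, e_n\}$, with $b_1 = x_n$ and $b_2 = e_n$, converges entrywise to $\left[\begin{smallmatrix} w & -d \\ d & w \end{smallmatrix}\right]$, a normal matrix with numerical radius $\sqrt{w^2 + d^2} = M$. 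Since the numerical range of any compression is contained in that of $T$, we obtain $w(T) \ge w(P_n) \to M$, which forces $w(T) = M = \|T\|$ and contradicts $w < M$. This compression step — isolating an asymptotically normal $2\times 2$ block on which $T$ already sees its full norm through the numerical radius — is where I expect the main work to lie, and it is precisely the point at which the ideas attributed to \cite[Lemma 1]{Li} enter.
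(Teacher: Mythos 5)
Your proposal is correct and follows essentially the same route as the paper: both directions are funneled through the identity $w(T)=\|T\|$, and the hard implication is settled by compressing $T$ to the two-dimensional subspace spanned by $x_n$ and the normalized orthogonal component of $Tx_n$, then showing this compression is asymptotically normal with numerical radius $\|T\|$. The differences are only in bookkeeping: you identify the limit compression $\left[\begin{smallmatrix} w & -d\\ d & w\end{smallmatrix}\right]$ explicitly via the approximate-eigenvector relations $(\|T\|^2I-T^{*}T)x_n\to 0$ and $(2wI-T-T^{*})x_n\to 0$ (after first passing to a subsequence so that the phase of $\langle Tx_n,x_n\rangle$ converges, a step you should make explicit before rotating $T$), whereas the paper reaches the same asymptotic normality by showing $T_n^{*}T_n\to\|T\|^2 I_2$ using numerical-radius estimates on $\mathrm{Re}(\overline{\alpha}_nT_n)$.
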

\begin{proof}
(i)$\Rightarrow$(ii) Let $T\parallel I$.
From (\ref{I.3.01}), there exists a sequence of unit
vectors $\{x_n\}$ in $\mathcal{H}$ such that
\begin{align}\label{I.3.1}
\lim_{n\rightarrow \infty } \big|\langle Tx_n, x_n\rangle\big| = \|T\|.
\end{align}
We have
\begin{align}\label{I.3.2}
\big|\langle Tx_n, x_n\rangle\big|\leq \|Tx_n\| \leq \|T\| \qquad
\mbox{and} \qquad \big|\langle Tx_n, x_n\rangle\big|\leq w(T) \leq \|T\|.
\end{align}
Hence, by (\ref{I.3.1}) and (\ref{I.3.2}), we obtain
\begin{align}\label{I.3.3}
\lim_{n\rightarrow \infty } \|Tx_n\| = \|T\| \qquad
\mbox{and} \qquad \lim_{n\rightarrow \infty } \big|\langle Tx_n, x_n\rangle\big| = w(T).
\end{align}
Also, by the definition of $dw(T)$ we have
\begin{align}\label{I.3.4}
\sqrt{\big|\langle Tx_n, x_n\rangle\big|^2 + \|Tx_n\|^4}\leq dw(T) \leq \sqrt{w^2(T) + \|T\|^4}
\end{align}
It follows from (\ref{I.3.3}) and (\ref{I.3.4}) that
\begin{align*}
dw(T) = \sqrt{w^2(T) + \|T\|^4}
\end{align*}
(ii)$\Rightarrow$(i) Let $dw(T) = \sqrt{w^2(T) + \|T\|^4}$.
So, by the definition of $dw(T)$, there exists a sequence of unit
vectors $\{x_n\}$ in $\mathcal{H}$ such that
\begin{align*}
\lim_{n\rightarrow \infty }\sqrt{\big|\langle Tx_n, x_n\rangle\big|^2 + \|Tx_n\|^4}
= \sqrt{w^2(T) + \|T\|^4}.
\end{align*}
Then we have
\begin{align}\label{I.3.6}
\lim_{n\rightarrow \infty } \big|\langle Tx_n, x_n\rangle\big| = w(T) \qquad
\mbox{and} \qquad \lim_{n\rightarrow \infty } \|Tx_n\| = \|T\|.
\end{align}
Our aim is to show that $w(T) = \|T\|$, and hence by (\ref{I.3.6}) we obtain
\begin{align*}
\lim_{n\rightarrow \infty } \big|\langle Tx_n, x_n\rangle\big| = \|T\|,
\end{align*}
or equivalently $T \parallel I$.
Write
\begin{align}\label{I.3.8}
Tx_n = \alpha_n x_n + \beta_n y_n
\end{align}
with $\langle x_n, y_n\rangle = 0$, $\|y_n\| = 1$ and
$\alpha_n, \beta_n \in \mathbb{C}$.
It follows from (\ref{I.3.6}) and (\ref{I.3.8}) that
$\alpha_n = \langle Tx_n, x_n\rangle$, $\beta_n = \langle Tx_n, y_n\rangle$,
$\lim_{n\rightarrow \infty} |\alpha_n| = w(T)$
and $$\lim_{n\rightarrow \infty} |\alpha_n|^2 + |\beta_n|^2 = \|T\|^2.$$
Put $\gamma_n = \langle Ty_n, x_n\rangle$, $\delta_n = \langle Ty_n, y_n\rangle$ and
$T_n = \begin{bmatrix}
\alpha_n & \gamma_n \\
\beta_n & \delta_n
\end{bmatrix}.$
Since
\begin{align*}
|\alpha_n| \leq w(T_n) \leq w(T)
\end{align*}
then
\begin{align}\label{I.3.9}
\lim_{n\rightarrow \infty} w(T_n) = w(T).
\end{align}
Furthermore, we have
\begin{align*}
|\alpha_n|^2 \leq w\left(\begin{bmatrix}
|\alpha_n|^2 &
\frac{\overline{\alpha}_n\gamma_n + \alpha_n \overline{\beta}_n}{2} \\
\frac{\overline{\alpha}_n\beta_n + \alpha_n \overline{\gamma}_n}{2} &
\frac{\overline{\alpha}_n\delta_n + \alpha_n \overline{\delta}_n}{2}
\end{bmatrix}\right)
= w\big(\mbox{Re}(\overline{\alpha}_nT_n \big)
\leq w(\overline{\alpha}_nT_n) \leq w^2(T).
\end{align*}
Thus $\lim_{n\rightarrow \infty} w\big(\mbox{Re}(\overline{\alpha}_nT_n \big) = w^2(T)$
and $\lim_{n\rightarrow \infty} \frac{\overline{\alpha}_n\gamma_n + \alpha_n \overline{\beta}_n}{2} =0$.
It follows that
\begin{align}\label{I.3.10}
\lim_{n\rightarrow \infty} |\gamma_n| = \lim_{n\rightarrow \infty} |\beta_n|.
\end{align}
On the other hands, we have
\begin{align*}
T^*_nT_n = \begin{bmatrix}|\alpha_n|^2 + |\beta_n|^2 &
\overline{\alpha}_n\gamma_n + \overline{\beta}_n\delta_n \\
\alpha_n\overline{\gamma}_n + \beta_n\overline{\delta}_n &
|\gamma_n|^2 + |\delta_n|^2
\end{bmatrix}.
\end{align*}
Therefore, we obtain
\begin{align*}
|\alpha_n|^2 + |\beta_n|^2
\leq \|T^*_nT_n\| = \|T_n\|^2 \leq \|T\|^2.
\end{align*}
It follows from the above inequality that
$\lim_{n\rightarrow \infty} \|T^*_nT_n\| = \|T\|^2$ and hence we get
$\lim_{n\rightarrow \infty} \overline{\alpha}_n\gamma_n + \overline{\beta}_n\delta_n =0$.
This yields
\begin{align}\label{I.3.11}
\lim_{n\rightarrow \infty} |\delta_n| = \lim_{n\rightarrow \infty} |\alpha_n|.
\end{align}
By (\ref{I.3.10}) and (\ref{I.3.11}) we reach
\begin{align*}
\lim_{n\rightarrow \infty} |\gamma_n|^2 + |\delta_n|^2
= \lim_{n\rightarrow \infty}|\alpha_n|^2 + |\beta_n|^2
= \|T\|^2,
\end{align*}
from which we get
\begin{align*}
\lim_{n\rightarrow \infty} T^*_nT_n =
\begin{bmatrix}\|T\|^2 & 0 \\
0 & \|T\|^2
\end{bmatrix}.
\end{align*}
It follows that
\begin{align}\label{I.3.12}
\lim_{n\rightarrow \infty} w(T_n) = \|T\|.
\end{align}
From (\ref{I.3.9}) and (\ref{I.3.12}) we conclude that $w(T) = \|T\|$.
\end{proof}
As a consequence of Theorem \ref{T.3.1}, we have the following result.
\begin{corollary}\label{C.3.1}
Let $T\in \mathbb{B}(\mathcal{H})$.
Then the following conditions are equivalent:
\begin{itemize}
\item[(i)] $dw(T) = \sqrt{w^2(T) + \|T\|^4}$.
\item[(ii)] $w(T) = \|T\|$.
\item[(iii)] $dw(T) = \|T\|\sqrt{1 + \|T\|^2}$.
\item[(iv)] $T^*T \leq w^2(T)I$.
\end{itemize}
\end{corollary}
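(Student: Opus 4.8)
The plan is to route every condition through (ii), using Theorem \ref{T.3.1} to dispatch the hard content and reserving only elementary estimates for the rest. Since Theorem \ref{T.3.1} already establishes $T\parallel I \Leftrightarrow dw(T)=\sqrt{w^2(T)+\|T\|^4}$, condition (i) is nothing but a restatement of $T\parallel I$. So the first step is to prove (i)$\Leftrightarrow$(ii) by showing $T\parallel I \Leftrightarrow w(T)=\|T\|$. For this I would apply the sequential characterization of parallelism (\ref{I.3.01}) with $S=I$: $T\parallel I$ holds iff there is a sequence of unit vectors $\{x_n\}$ with $|\langle Tx_n,x_n\rangle|\to \|T\|\,\|I\|=\|T\|$. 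Because $|\langle Tx_n,x_n\rangle|\leq w(T)\leq\|T\|$ always holds, such a sequence exists precisely when the defining supremum of $w(T)$ attains $\|T\|$, i.e. exactly when $w(T)=\|T\|$. This gives (i)$\Leftrightarrow$(ii) with essentially no computation.

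For (ii)$\Leftrightarrow$(iii) I would lean on two sharp bounds from the introduction: $dw(T)\leq\sqrt{w^2(T)+\|T\|^4}$ from (\ref{I.1.2}) and $w(T)\leq\|T\|$ from (\ref{1.1}). Chaining them yields, unconditionally, $dw(T)\leq\sqrt{w^2(T)+\|T\|^4}\leq\sqrt{\|T\|^2+\|T\|^4}=\|T\|\sqrt{1+\|T\|^2}$. If $w(T)=\|T\|$, substituting into the middle term collapses the entire chain to equalities and produces (iii). Conversely, if (iii) holds, then $dw(T)=\|T\|\sqrt{1+\|T\|^2}$ forces both inequalities to be equalities; the second equality gives $w^2(T)+\|T\|^4=\|T\|^2+\|T\|^4$, whence $w(T)=\|T\|$.

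For (ii)$\Leftrightarrow$(iv) I would simply unwind the operator inequality. The relation $T^*T\leq w^2(T)I$ means $\|Tx\|^2=\langle T^*Tx,x\rangle\leq w^2(T)$ for every unit vector $x$; taking the supremum over unit $x$ this is exactly $\|T\|^2\leq w^2(T)$, i.e. $\|T\|\leq w(T)$. Combined with the ever-present bound $w(T)\leq\|T\|$ from (\ref{1.1}), this is equivalent to $w(T)=\|T\|$, which is (ii). These three equivalences, all centered at (ii), cover every pair among the four conditions.

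The whole argument is elementary once Theorem \ref{T.3.1} is available, so I do not expect a genuine obstacle. The only point deserving a little care is the first step, where the \emph{limiting} (sequential) form of the parallelism characterization (\ref{I.3.01}) must be matched against the defining supremum of $w(T)$; this is clean because the trivial case $T=0$ is immediate, and for $T\neq 0$ every remaining claim is an algebraic consequence of the two sharp bounds (\ref{1.1}) and (\ref{I.1.2}).
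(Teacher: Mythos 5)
Your proposal is correct and follows essentially the same route as the paper: everything is funneled through condition (ii) via Theorem \ref{T.3.1}, with (iii) and (iv) handled by the same elementary estimates the paper uses. The only imprecision is in your (ii)$\Rightarrow$(iii) step: setting $w(T)=\|T\|$ makes the two upper bounds in your chain coincide but does not by itself ``collapse the chain to equalities''---you must first invoke (ii)$\Rightarrow$(i) (which you have already established) to get $dw(T)=\sqrt{w^2(T)+\|T\|^4}$ before substituting, which is exactly how the paper argues.
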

\begin{proof}
The equivalence (i)$\Leftrightarrow$(ii)
follows from the proof of Theorem \ref{T.3.1}.

(i)$\Rightarrow$(iii) This implication follows from
the equivalence (i)$\Leftrightarrow$(ii).

(iii)$\Rightarrow$(i) Let (iii) holds.
Since $w(T) \leq \|T\|$, we have
\begin{align*}
\|T\|\sqrt{1 + \|T\|^2} = dw(T) \leq
\sqrt{w^2(T) + \|T\|^4} \leq \|T\|\sqrt{1 + \|T\|^2},
\end{align*}
and so $dw(T) = \sqrt{w^2(T) + \|T\|^4}$.

(i)$\Leftrightarrow$(iv) By the equivalence (i)$\Leftrightarrow$(ii),
$dw(T) = \sqrt{w^2(T) + \|T\|^4}$ if and only if
$w(T) = \|T\|$, that is, $\|Tx\| \leq w(T)\|x\|$ for all $x\in \mathcal{H}$.
This is equivalent to $\|Tx\|^2 \leq w^2(T)\|x\|^2$ for all $x\in \mathcal{H}$,
that is, $\langle Tx, Tx\rangle \leq \langle w^2(T)x, x\rangle$
for all $x\in \mathcal{H}$, and finally $\langle T^*Tx - w^2(T)x, x\rangle \leq 0$
for all $x\in \mathcal{H}$, or equivalently, $T^*T \leq w^2(T)I$.
\end{proof}
Recall that if $x, y\in \mathcal{H}$, then $\|x\otimes y\| = \|x\|\|y\|$
and $w(x\otimes y) = \frac{1}{2}\left(|\langle x, y\rangle| + \| x\| \| y\|\right)$,
where $x\otimes y$ is the rank one operator defined by
$(x\otimes y)(z) : = \langle z, y\rangle x$ for all $z\in \mathcal{H}$.
\begin{corollary}
For $x, y \in \mathcal{H}$, the following conditions are equivalent:
\begin{itemize}
\item[(ii)] $dw(x\otimes y) = \sqrt{w^2(x\otimes y) + \|x\otimes y\|^4}$.
\item[(ii)] The vectors $x$ and $y$ are linearly dependent.
\end{itemize}
\end{corollary}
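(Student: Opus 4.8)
The plan is to reduce everything to the equivalence (i)$\Leftrightarrow$(ii) already established in Corollary~\ref{C.3.1}, applied to the rank-one operator $T = x\otimes y$. That corollary tells us that
\[
dw(x\otimes y) = \sqrt{w^2(x\otimes y) + \|x\otimes y\|^4}
\]
holds if and only if $w(x\otimes y) = \|x\otimes y\|$. Thus it suffices to characterize, purely in terms of $x$ and $y$, when the numerical radius of $x\otimes y$ equals its operator norm.

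For this second step I would invoke the two explicit formulas recalled just before the statement, namely $\|x\otimes y\| = \|x\|\,\|y\|$ and $w(x\otimes y) = \tfrac{1}{2}\big(|\langle x, y\rangle| + \|x\|\,\|y\|\big)$. Substituting these into the equation $w(x\otimes y) = \|x\otimes y\|$ and clearing the factor $\tfrac12$ yields
\[
|\langle x, y\rangle| + \|x\|\,\|y\| = 2\|x\|\,\|y\|,
\]
which simplifies to the single equality $|\langle x, y\rangle| = \|x\|\,\|y\|$.

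The final step is to recognize this as the equality case of the Cauchy--Schwarz inequality: one has $|\langle x, y\rangle| = \|x\|\,\|y\|$ precisely when $x$ and $y$ are linearly dependent. Chaining the three equivalences then gives (i)$\Leftrightarrow$(ii). I expect essentially no obstacle here; the only points requiring care are applying Corollary~\ref{C.3.1} correctly with $T = x\otimes y$ and noting the degenerate case $x = 0$ or $y = 0$, in which both conditions hold trivially since $x\otimes y = 0$ and a set containing the zero vector is linearly dependent. The argument is therefore a short matter of substituting known identities rather than a conceptual one.
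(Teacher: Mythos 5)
Your proposal is correct and follows essentially the same route as the paper, which simply cites the equivalence (i)$\Leftrightarrow$(ii) of Corollary~\ref{C.3.1} together with the formulas $\|x\otimes y\| = \|x\|\,\|y\|$ and $w(x\otimes y) = \tfrac12\big(|\langle x,y\rangle| + \|x\|\,\|y\|\big)$ recalled just before the statement. You merely spell out the reduction to the equality case of Cauchy--Schwarz, which the paper leaves implicit.
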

\begin{proof}
This follows form the equivalence (i)$\Leftrightarrow$(ii) of Corollary \ref{C.3.1}.
\end{proof}
For $T\in \mathbb{B}(\mathcal{H})$ the following results were
obtained in \cite{Z.M.1}:
\begin{align}\label{I.3.14}
T \parallel I \Leftrightarrow T\parallel T^* \qquad
\mbox{and} \qquad T \parallel I \Leftrightarrow T^*T\parallel T^*.
\end{align}
As an immediate consequence of (\ref{I.3.01}), (\ref{I.3.14})
and Theorem \ref{T.3.1}, we have the following result.
\begin{corollary}
Let $T\in \mathbb{B}(\mathcal{H})$. The following statements are equivalent:
\begin{itemize}
\item[(i)] $dw(T) = \sqrt{w^2(T) + \|T\|^4}$.
\item[(ii)] There exists a sequence of unit vectors $\{x_n\}$ in $\mathcal{H}$ such that
\begin{equation*}
\lim_{n\rightarrow\infty} \big|\langle T^2x_n, x_n\rangle\big| = \|T\|^2.
\end{equation*}
\item[(iii)] There exists a sequence of unit vectors $\{x_n\}$ in $\mathcal{H}$ such that
\begin{equation*}
\lim_{n\rightarrow\infty} \big|\langle TT^*Tx_n, x_n\rangle\big| = \|T\|^3.
\end{equation*}
\end{itemize}
\end{corollary}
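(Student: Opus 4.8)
The plan is to recognize that this Corollary requires no new analytic work: it is pure bookkeeping that records which parallelism relation corresponds to which trace-type limit, with all the substance already contained in the earlier results. First I would invoke Theorem \ref{T.3.1} to replace condition (i) by the single statement $T\parallel I$. This is the pivot through which both (ii) and (iii) will pass, so once (i) $\Leftrightarrow$ $T\parallel I$ is in hand, it suffices to show $T\parallel I \Leftrightarrow \text{(ii)}$ and $T\parallel I \Leftrightarrow \text{(iii)}$ separately.

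To reach (ii), I would use the first equivalence in (\ref{I.3.14}), namely $T\parallel I \Leftrightarrow T\parallel T^*$, and then feed $T\parallel T^*$ into the sequential characterization (\ref{I.3.01}). This produces a sequence of unit vectors $\{x_n\}$ with $\lim_{n\to\infty}|\langle Tx_n, T^*x_n\rangle| = \|T\|\,\|T^*\|$. The only manipulation needed is the adjoint identity $\langle Tx_n, T^*x_n\rangle = \langle T^2x_n, x_n\rangle$ together with $\|T^*\| = \|T\|$, which rewrites the limit as $\lim_{n\to\infty} |\langle T^2 x_n, x_n\rangle| = \|T\|^2$; this is exactly (ii). Since each step in (\ref{I.3.14}) and (\ref{I.3.01}) is an equivalence, reversing them yields (ii) $\Rightarrow$ $T\parallel I$ as well.

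For (iii) the argument is entirely parallel: I would use the second equivalence in (\ref{I.3.14}), $T\parallel I \Leftrightarrow T^*T\parallel T^*$, and again apply (\ref{I.3.01}), now to the pair $T^*T$ and $T^*$. This yields a sequence of unit vectors with $\lim_{n\to\infty} |\langle T^*Tx_n, T^*x_n\rangle| = \|T^*T\|\,\|T^*\|$. Here I would invoke the $C^*$-identity $\|T^*T\| = \|T\|^2$ so that the right-hand side becomes $\|T\|^3$, and use the rearrangement $\langle T^*Tx_n, T^*x_n\rangle = \langle TT^*Tx_n, x_n\rangle$, turning the limit into $\lim_{n\to\infty} |\langle TT^*Tx_n, x_n\rangle| = \|T\|^3$, which is precisely (iii).

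I do not anticipate a genuine obstacle: the analytic content is carried entirely by Theorem \ref{T.3.1} and the previously cited facts (\ref{I.3.01}) and (\ref{I.3.14}), and every implication I am chaining is reversible. The sole place demanding care is the two adjoint manipulations and the matching of norms ($\|T^*\| = \|T\|$ and $\|T^*T\| = \|T\|^2$); getting the exponents $\|T\|^2$ and $\|T\|^3$ correct is the only nontrivial piece of bookkeeping, and it is exactly this matching that justifies the paper's description of the result as an immediate consequence.
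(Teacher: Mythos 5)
Your proposal is correct and follows exactly the route the paper intends: the paper gives no written proof, stating only that the corollary is an immediate consequence of (\ref{I.3.01}), (\ref{I.3.14}) and Theorem \ref{T.3.1}, and your chain (i) $\Leftrightarrow T\parallel I \Leftrightarrow T\parallel T^*$ (resp.\ $T^*T\parallel T^*$) $\Leftrightarrow$ (ii) (resp.\ (iii)), together with the adjoint identities and $\|T^*\|=\|T\|$, $\|T^*T\|=\|T\|^2$, is precisely that argument spelled out.
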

\begin{corollary}\label{Cr.3.10}
Let $\mathcal{H}$ be a finite dimensional Hilbert space and
$T\in \mathbb{B}(\mathcal{H})$. The following statements are equivalent:
\begin{itemize}
\item[(i)] $dw(T) = \sqrt{w^2(T) + \|T\|^4}$.
\item[(ii)] There exists a unit vector $x\in \mathcal{H}$
such that $|\langle Tx, x\rangle| = \|T\|$.
\item[(iii)] There exists $x\in \mathbb{M}_T$
such that for every $\gamma \in \mathbb{C}$
the vectors $Tx + \gamma x$ and $x$ are linearly dependent.
\end{itemize}
\end{corollary}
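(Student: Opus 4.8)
The plan is to prove that each of the three conditions is equivalent to the single parallelism relation $T \parallel I$, which then chains them together. The crucial structural fact is that $\mathcal{H}$ is finite dimensional, so every operator on $\mathcal{H}$ is compact; in particular both $T$ and $I$ are compact, making Theorem \ref{T.2.1} applicable with $S = I$, and the finite-dimensional form of the parallelism criterion recorded just after (\ref{I.3.01}) available as well.

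The equivalence (i) $\Leftrightarrow$ $T \parallel I$ requires no new argument: it is exactly the content of Theorem \ref{T.3.1}, which holds for arbitrary $T \in \mathbb{B}(\mathcal{H})$ with no dimensional restriction. For $T \parallel I \Leftrightarrow$ (ii), I would invoke the finite-dimensional characterization stated after (\ref{I.3.01}): $T \parallel S$ holds if and only if there is a unit vector $x$ with $|\langle Tx, Sx\rangle| = \|T\|\,\|S\|$. Substituting $S = I$, and using $Ix = x$ together with $\|I\| = 1$, this is precisely the assertion that some unit vector $x$ satisfies $|\langle Tx, x\rangle| = \|T\|$, that is, condition (ii).

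For $T \parallel I \Leftrightarrow$ (iii), I would specialize Theorem \ref{T.2.1} to $S = I$. The key observation is that $\mathbb{M}_I$ is the entire unit sphere of $\mathcal{H}$, since $\|Ix\| = \|x\| = 1 = \|I\|$ for every unit vector $x$; hence $\mathbb{M}_T \cap \mathbb{M}_I = \mathbb{M}_T$. With this identification, statement (ii) of Theorem \ref{T.2.1} with $S = I$ becomes verbatim: there is $x \in \mathbb{M}_T$ such that $Tx + \gamma x$ and $x$ are linearly dependent for every $\gamma \in \mathbb{C}$, which is condition (iii). Since $T$ and $I$ are both compact, Theorem \ref{T.2.1} yields $T \parallel I \Leftrightarrow$ (iii), and combining the three equivalences finishes the argument.

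The corollary presents no substantial obstacle, being an assembly of previously established characterizations; the only steps demanding attention are the identification $\mathbb{M}_I = \{x : \|x\| = 1\}$, which collapses the intersection to $\mathbb{M}_T$, and the recognition that finite dimensionality is exactly what licenses both the compactness hypothesis of Theorem \ref{T.2.1} and the passage from the limiting sequence of (\ref{I.3.01}) to a genuinely norm-attaining unit vector in (ii).
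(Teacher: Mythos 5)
Your proposal is correct and follows essentially the same route as the paper, which likewise derives all three equivalences from $T\parallel I$ using Theorem \ref{T.3.1} for (i), the compact/finite-dimensional parallelism characterization (the paper cites (\ref{I.3.02}), you cite the equivalent finite-dimensional remark after (\ref{I.3.01})) for (ii), and Theorem \ref{T.2.1} with $S=I$ for (iii). Your explicit observations that finite dimensionality supplies compactness and that $\mathbb{M}_I$ is the whole unit sphere are exactly the details the paper leaves implicit.
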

\begin{proof}
This follows immediately form (\ref{I.3.02}) and Theorems \ref{T.3.1}, \ref{T.2.1}.
\end{proof}
The following example shows that the condition of finite dimensionality in the
implication (i)$\Rightarrow $(ii) of Corollary \ref{Cr.3.10} is essential.
\begin{example}
Consider the shift operator $T\,:\ell^2\longrightarrow \ell^2$ defined by
$$T(\xi_1, \xi_2, \xi_3, \cdots) = (0, \xi_1, \xi_2, \xi_3, \cdots).$$
One can easily observe that $dw(T) = \sqrt{w^2(T) + \|T\|^4} = \sqrt{2}$,
but there is no unit vector $x\in\ell^2$ such that $|\langle Tx, x\rangle| = \|T\|.$
\end{example}
For a subspace $\mathcal{H}_0$ of a Hilbert space $\mathcal{H}$ let
$\mathbb{S}_{\mathcal{H}_0} = \{x\in \mathcal{H}_0:\, \|x\| = 1\}$ and
$\|T\|_{{\mathcal{H}_0}^\perp} = \sup\big\{\|Tx\|: x\in {\mathcal{H}_0}^\perp, \, \|x\| = 1\big\}$.
Let us quote a result from \cite{Z.M.2}.
\begin{lemma}\label{L.3.1}
Let $T\in\mathbb{B}(\mathcal{H})$. If $\mathbb{S}_{\mathcal{H}_0} = \mathbb{M}_T$,
where $\mathcal{H}_0$ is a finite dimensional subspace of $\mathcal{H}$
and $\|T\|_{{\mathcal{H}_0}^\perp}< \|T\|$. Then for any
$S\in\mathbb{B}(\mathcal{H})$ the following statements are equivalent:
\begin{itemize}
\item[(i)] $T\parallel S$.
\item[(ii)] There exists a unit vector $x\in \mathcal{H}_0$
such that $|\langle Tx, Sx\rangle| = \|T\|\|S\|$.
\end{itemize}
\end{lemma}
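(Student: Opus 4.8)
The plan is to dispatch (ii)$\Rightarrow$(i) immediately and to prove (i)$\Rightarrow$(ii) by upgrading the approximating sequence of (\ref{I.3.01}) to a single norm-attaining unit vector lying in $\mathcal{H}_0$. For (ii)$\Rightarrow$(i), a unit vector $x\in\mathcal{H}_0$ with $|\langle Tx,Sx\rangle|=\|T\|\|S\|$ provides a constant sequence, so (\ref{I.3.01}) gives $T\parallel S$. Throughout I may assume $T\neq 0$ and $S\neq 0$, the remaining cases being trivial.

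For (i)$\Rightarrow$(ii), I would start from the sequence $\{x_n\}$ of unit vectors with $|\langle Tx_n,Sx_n\rangle|\to\|T\|\|S\|$ furnished by (\ref{I.3.01}). Since $|\langle Tx_n,Sx_n\rangle|\leq\|Tx_n\|\,\|Sx_n\|\leq\|T\|\,\|S\|$, a squeezing argument first forces $\|Tx_n\|\to\|T\|$ (and $\|Sx_n\|\to\|S\|$). The decisive structural observation I would then make is that $\mathbb{S}_{\mathcal{H}_0}=\mathbb{M}_T$ forces $\mathcal{H}_0$ to be exactly the eigenspace of the positive operator $T^*T$ for its top eigenvalue $\|T\|^2$: from $T^*T\leq\|T\|^2 I$, equality $\langle T^*Tu,u\rangle=\|T\|^2\|u\|^2$ holds precisely when $(\|T\|^2 I-T^*T)u=0$. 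Hence $\mathcal{H}_0^\perp$ is invariant under the self-adjoint operator $T^*T$, and setting $c:=\|T\|_{\mathcal{H}_0^\perp}<\|T\|$ one has $(T^*T)|_{\mathcal{H}_0^\perp}\leq c^2 I$.

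Next I would decompose $x_n=u_n+v_n$ with $u_n\in\mathcal{H}_0$ and $v_n\in\mathcal{H}_0^\perp$; the invariance annihilates the cross terms and yields the clean estimate
\[
\|Tx_n\|^2=\langle T^*Tx_n,x_n\rangle=\|T\|^2\|u_n\|^2+\langle T^*Tv_n,v_n\rangle\leq\|T\|^2-\big(\|T\|^2-c^2\big)\|v_n\|^2,
\]
where I used $\|u_n\|^2+\|v_n\|^2=1$. Because $\|Tx_n\|\to\|T\|$ and $\|T\|^2-c^2>0$, this immediately gives $\|v_n\|\to 0$, so $x_n$ and $u_n$ share the same asymptotic behaviour. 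Finally I would invoke the finite-dimensionality of $\mathcal{H}_0$: the bounded sequence $\{u_n\}$ then has a norm-convergent subsequence $u_n\to u\in\mathcal{H}_0$, whence $x_n=u_n+v_n\to u$ with $\|u\|=\lim\|x_n\|=1$, and continuity of $T$ and $S$ gives $|\langle Tu,Su\rangle|=\lim|\langle Tx_n,Sx_n\rangle|=\|T\|\|S\|$, which is exactly (ii).

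I expect the main obstacle to be precisely the concentration step $\|v_n\|\to 0$: a crude triangle-inequality bound on $\|T(u_n+v_n)\|$ leaves an uncontrolled cross term (which can even exceed $\|T\|$), so everything hinges on recognizing the eigenspace structure of $T^*T$ that makes $\mathcal{H}_0^\perp$ invariant and kills that cross term. The roles of the two hypotheses then become transparent: the spectral gap $\|T\|_{\mathcal{H}_0^\perp}<\|T\|$ supplies the strict inequality that drives $v_n\to 0$, while the finite-dimensionality of $\mathcal{H}_0$ converts this concentration into a genuine norm limit and thereby produces the required attained vector.
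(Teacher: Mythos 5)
Your proof is correct. Note first that the paper itself does not prove Lemma \ref{L.3.1}; it is quoted from \cite{Z.M.2}, so there is no in-paper argument to compare against --- but your blind reconstruction is a complete and sound proof of the quoted statement. The two hypotheses are used exactly where they must be: the key structural step, identifying $\mathcal{H}_0$ with the eigenspace $\ker(\|T\|^2I - T^*T)$ via the positivity of $\|T\|^2I - T^*T$ (so that $\langle(\|T\|^2I-T^*T)u,u\rangle=0$ forces $(\|T\|^2I-T^*T)u=0$), is what makes $\mathcal{H}_0^\perp$ reduce $T^*T$ and kills the cross terms in $\|T(u_n+v_n)\|^2$; the spectral gap $\|T\|_{\mathcal{H}_0^\perp}<\|T\|$ then yields $\|v_n\|\to 0$, and finite-dimensionality of $\mathcal{H}_0$ converts this into an attained maximizer. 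The preliminary squeeze giving $\|Tx_n\|\to\|T\|$ from $|\langle Tx_n,Sx_n\rangle|\to\|T\|\,\|S\|$ is also valid (after discarding the trivial cases $T=0$, $S=0$, as you do). I see no gap.
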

As an immediate consequence of Theorem \ref{T.3.1} and Lemma \ref{L.3.1},
we have the following result.
\begin{corollary}
Let $T\in\mathbb{B}(\mathcal{H})$. If $\mathbb{S}_{\mathcal{H}_0} = \mathbb{M}_T$,
where $\mathcal{H}_0$ is a finite dimensional subspace of $\mathcal{H}$
and $\|T\|_{{\mathcal{H}_0}^\perp}< \|T\|$. Then the following
statements are equivalent:
\begin{itemize}
\item[(i)] $dw(T) = \sqrt{w^2(T) + \|T\|^4}$.
\item[(ii)] There exists a unit vector $x\in \mathcal{H}_0$
such that $|\langle Tx, x\rangle| = \|T\|$.
\end{itemize}
\end{corollary}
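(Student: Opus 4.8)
The plan is to obtain the stated equivalence as a direct concatenation of the two results the corollary cites, Theorem \ref{T.3.1} and Lemma \ref{L.3.1}, taking the second operator to be the identity $I$. Since both ingredients are already established, the argument is essentially a matter of substituting $S = I$ and tracking the normalization.

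First I would invoke Theorem \ref{T.3.1}, which asserts, for an arbitrary $T \in \mathbb{B}(\mathcal{H})$, that the Davis--Wielandt equality $dw(T) = \sqrt{w^2(T) + \|T\|^4}$ in condition (i) holds if and only if $T \parallel I$. This step requires no use of the standing hypotheses and reduces the problem to characterizing norm-parallelism to the identity under those hypotheses.

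Next I would apply Lemma \ref{L.3.1} with the choice $S = I$. The hypotheses of that lemma, namely $\mathbb{S}_{\mathcal{H}_0} = \mathbb{M}_T$ for a finite dimensional subspace $\mathcal{H}_0$ with $\|T\|_{\mathcal{H}_0^\perp} < \|T\|$, are exactly those assumed in the corollary, so the lemma applies verbatim. It yields that $T \parallel I$ if and only if there is a unit vector $x \in \mathcal{H}_0$ with $|\langle Tx, Ix\rangle| = \|T\|\,\|I\|$. Because $\|I\| = 1$ and $Ix = x$, this condition collapses to $|\langle Tx, x\rangle| = \|T\|$, which is precisely condition (ii).

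Chaining the two equivalences then gives (i) $\Leftrightarrow T \parallel I \Leftrightarrow$ (ii), which completes the proof. I do not foresee any genuine obstacle here: the entire content lies in the two cited results, and the only verification needed is that substituting $S = I$ into Lemma \ref{L.3.1} is legitimate and that the normalization $\|I\| = 1$ turns the general parallelism equality $|\langle Tx, Sx\rangle| = \|T\|\,\|S\|$ into the stated condition on the diagonal entry $\langle Tx, x\rangle$; both are immediate.
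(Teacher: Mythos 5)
Your proposal is correct and follows exactly the route the paper intends: the corollary is stated there as an immediate consequence of Theorem \ref{T.3.1} together with Lemma \ref{L.3.1} applied with $S = I$, which is precisely your chain of equivalences. Nothing further is needed.
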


\textbf{Acknowledgement.}
This research is supported by a grant from the Iran National Science Foundation (INSF- No. 95013683).

\bibliographystyle{amsplain}

\end{document}